\patchcmd{\section}{\scshape}{\bf}{}{}
\newtheorem{thm}{Theorem}[section]
\newtheorem{lemma}[thm]{Lemma}
\newtheorem{prop}[thm]{Proposition}
\theoremstyle{definition}
\newtheorem{Def}[thm]{Definition}
\newtheorem{ex}[thm]{Example}
\newtheorem{rmk}[thm]{Remark}
\makeatletter \renewenvironment{proof}[1][\proofname] {\par\pushQED{\qed}\normalfont\topsep6\p@\@plus6\p@\relax\trivlist\item[\hskip\labelsep\bfseries#1\@addpunct{.}]\ignorespaces}{\popQED\endtrivlist\@endpefalse} \makeatother
\begin{document}

\title{E\MakeLowercase{xploring stronger forms of transitivity on} $G$-\MakeLowercase{spaces}}

\let\MakeUppercase\relax
\author{Mukta Garg \and Ruchi Das}
\newcommand{\acr}{\newline\indent}
\address{(M. Garg) Department of Mathematics, University of Delhi, Delhi-110007, India}
\email{mgarg@maths.du.ac.in, mukta.garg2003@gmail.com}

\address{(R. Das) Department of Mathematics, University of Delhi, Delhi-110007, India}
\email{rdasmsu@gmail.com}

\renewcommand{\thefootnote}{\fnsymbol{footnote}}
\footnotetext{2010 \textit{Mathematics Subject Classification}. 54H20 (primary), 37B05 (secondary).}
\renewcommand{\thefootnote}{\arabic{footnote}}
\keywords{topological transitivity, topological mixing, minimal, $G$-space, pseudoequivariant map.}
\begin{abstract}
In this paper we introduce and study some stronger forms of transitivity like total transitivity, weakly mixing for maps on $G$-spaces. We obtain their relationship with the earlier defined notion of strongly mixing for maps on $G$-spaces. We also study $G$-minimal maps on $G$-spaces in detail.
\end{abstract}
\maketitle

\section{Introduction}
Dynamical properties of maps in dynamical systems have been extensively studied in recent years. They are of extreme importance in the qualitative study of dynamical systems. One of the very important and useful dynamical properties is topological transitivity. It plays an important role in the study of chaos theory and decomposition theorems. Apart from standard topological transitivity, various variants of this concept are proposed and studied. For example, total transitivity, topological mixing, minimality etc. One can refer \cite{AK, A, B, C, D, MD, K1, K2, K3} for results on these notions. While working in one dimensional topological dynamics, it is natural to try to extend results studied in a particular setting to more general settings. We show that some important facts from the topological dynamics work on much more general spaces than on metric spaces/topological spaces namely, on $G$-spaces, that is on topological spaces on which topological groups act continuously. Dynamical properties of group actions have been defined and studied in detail \cite{CA}. However, dynamical properties for maps on $G$-spaces apparently have not attracted much attention and a systematic study has not been done. The present paper is a sincere attempt in this direction. In \cite{TC} authors have defined strongly $G$-mixing map and used it to prove decomposition theorem on $G$-spaces. We study in detail stronger forms of transitivity on metric/topological $G$-spaces like total $G$-transitivity, strongly $G$-mixing, weakly $G$-mixing, $G$-minimality.

In Section 2, we introduce notions of total $G$-transitivity and weakly $G$-mixing for maps on $G$-spaces. We study their interrelations with strongly $G$-mixing maps on $G$-spaces. Observing that in general, notions of total $G$-transitivity and weakly $G$-mixing are independent, we provide conditions under which one notion implies the other. Section 3 is devoted to the study of $G$-minimal maps on $G$-spaces. Justifying that product of two $G$-minimal maps need not be $G\times G$-minimal on the product space, we give a sufficient condition under which product of two $G$-minimal maps become $G\times G$-minimal. Giving some characterizations of $G$-minimal maps, we show that a pseudoequivariant self map on a compact Hausdorff $G$-space possesses a $G$-minimal set.

We write $\mathbb{R}$ for the set of real numbers, $\mathbb{Z}$ for the set of integers and $\mathbb{N}$ for the set of positive integers. A (discrete) dynamical system is a pair $(X,f)$, where $X$ is a topological space and $f:X\rightarrow X$ is a continuous map. For $x\in X$, the \textit{$f$-orbit} of $x$ in $X$ is given by the set $O_f(x)=\{f^k(x):k\geqslant 0\}$, where $f^k$ is the $k^{th}$ iteration of $f$. A point $x\in X$ is said to be \textit{isolated} if $\{x\}$ is open in $X$. A point $x\in X$ is a \textit{periodic point} of $f$ if $f^k(x)=x$ for some $k\in\mathbb{N}$. The smallest such $k$ is called \textit{prime period} of $x$. The set of periodic points of $f$ is denoted by Per$(f)$. A map $f$ is said to be \textit{topologically transitive} (or \textit{transitive}) if for any pair of nonempty open subsets $U$, $V$ of $X$, there exists $k\geqslant 1$ such that $f^k(U)\cap V\ne\emptyset$. The facts that product of transitive maps need not be transitive and composition of transitive maps need not be transitive motivated the concepts of weakly mixing and total transitivity which are stronger than transitivity. A map $f$ is called \textit{totally transitive} if all its iterates $f^n$, $n\geqslant 1$, are transitive. A map $f$ is said to be \textit{strongly mixing} if for any pair of nonempty open subsets $U$, $V$ of $X$, there is $N\in\mathbb{N}$ such that for all $n\geqslant N$, $f^n(U)\cap V\ne\emptyset$. Also $f$ is said to be \textit{weakly mixing} if $f\times f$ is transitive. One can note that a strongly mixing map is weakly mixing but the converse is not true \cite{KP}. A subset $A$ of $X$ is said to be \textit{$+f$ invariant} if $f(A)\subseteq A$, \textit{$-f$ invariant} if $f^{-1}(A)\subseteq A$ and \textit{$f$-invariant} if $f(A)=A$. A dynamical system $(X,f)$ is said to be \textit{minimal} if every orbit in $X$ is dense in $X$; in that case we also say that $f$ itself is minimal. A subset $A$ of $X$ is said to be a \textit{minimal} set of $f$ if it is nonempty, closed, $+f$ invariant and $(A,f|_A)$ is minimal.

By a \textit{G-space} $X$, we mean a triple $(G,X,\theta)$, where $G$ is a topological group, $X$ is a topological space and $\theta : G\times X \rightarrow X$ is a continuous action of $G$ on $X$ \cite{BD}. We denote $\theta (g,x)$ by $g.x$, for $g\in G$ and $x\in X$. By a trivial action of $G$ on $X$, we mean $g.x=x$ for all $g\in G$, $x\in X$. Note that if $X$ is a $G$-space, then for any $g\in G$, $T_g:X\rightarrow X$ defined by $T_g(x)=g.x$, $x\in X$, is a homeomorphism. For $x\in X$, the \textit{$G$-orbit} of $x$ in $X$ is given by the set $G(x)=\{g.x : g\in G\}$. For a subset $A$ of $X$, we also define $G(A)=\{g.a:g\in G,a\in A\}$. If $X$, $Y$ are $G$-spaces, then a continuous map $f:X\rightarrow Y$ is said to be \textit{equivariant} if $f(g.x)=g.f(x)$ for every $g\in G$ and every $x\in X$ and \textit{pseudoequivariant} if $f(G(x))=G(f(x))$ for every $x\in X$. It is clear that every equivariant map is pseudoequivariant but the converse is not true \cite{RD}. Note that if $f$ is pseudoequivariant, then $f(G(A))=G(f(A))$ and $f^{-1}(G(A))=G(f^{-1}(A))$ for every subset $A$ of $X$. Consider the equivalence relation $\sim$ defined on $X$ by $x\sim y$ if $y=g.x$ for some $g\in G$. Then for any $x\in X$, the equivalence class of $x$ is $G(x)$. The set of all equivalence classes $G(x)$, $x\in X$, is denoted by $X/G$, endowed with quotient topology, it is called the \textit{orbit space} of $X$. The map $p:X\rightarrow X/G$ defined by $p(x)=G(x)$, $x\in X$, is called the \textit{orbit map} which is clearly continuous, onto and open. If $f:X\rightarrow X$ is pseudoequivariant, then its induced map $\bar{f}:X/G\rightarrow X/G$ defined by $\bar{f}(G(x))=G(f(x))$, $G(x)\in X/G$, is well defined. Note that $\bar{f}$ is continuous and $p\circ f=\bar{f}\circ p$.

A subset $A$ of $X$, where $X$ is a $G$-space, is said to be \textit{$G$-invariant} if $g.A\subseteq A$ for every $g\in G$. For $x\in X$, the associated \textit{$G_{f}$-orbit} of $x$ is given by the set $G_{f}(x)=G(O_f(x))=\{g.f^k(x):g\in G, k\geqslant 0\}$. Note that if $f:X\rightarrow X$ is pseudoequivariant, then $G_f(x)$ is the smallest $+f$ invariant, $G$-invariant set containing $x$. Also for a subset $A$ of $X$ and $f:X\rightarrow X$ pseudoequivariant, $G_f^+(A)=\cup_{g\in G}\cup_{k\geqslant 0}g.f^k(A)$ is the smallest $+f$ invariant, $G$-invariant set containing $A$ and $G_f^-(A)=\cup_{g\in G}\cup_{k\geqslant 0}g.f^{-k}(A)$ is the smallest $-f$ invariant, $G$-invariant set containing $A$. Also recall that a point $x\in X$ is called \textit{$G$-transitive point} of $f$ if its $G_f$-orbit, $G_f(x)$, is dense in $X$. The set of all $G$-transitive points of $f$ is denoted by $G$-$Trans_f$.

\section{Total transitivity and mixing on $G$-spaces}

Let $X$ be a $G$-space and $f:X\rightarrow X$ be continuous. Recall that the map $f$ is said to be \textit{$G$-transitive} $(GT)$ if for any pair of nonempty open subsets $U$, $V$ of $X$, there exists $g\in G$ such that the set
\begin{center}
$N_g(U,V)=\{k\in\mathbb{N}:g.f^k(U)\cap V\neq\emptyset\}$
\end{center}
is nonempty \cite{RT}.\\

The following example shows that if $f:X\rightarrow X$ is $G$-transitive, then $f^2$ need not be $G$-transitive.

\begin{ex}\label{ex1}
Consider $X=\{\pm\frac{1}{n},\pm (1-\frac{1}{n}): n\in\mathbb{N}\}$ with relative topology of $\mathbb{R}$. Define $h:X\rightarrow X$ by
\begin{center}
$h(x)=\left\{\begin{array}{ll} x &\mbox{if }\hspace{0.5mm} x\in\{-1,0,1\},\\ -x^+ &\mbox{if }\hspace{0.5mm} 0<x<1,\hspace{0.5mm} x\in X,\\ -x^- &\mbox{if } -1<x<0,\hspace{0.5mm} x\in X,\end{array}\right.$
\end{center}
where $x^+$ ($x^-$) denotes the element of $X$ immediate to the right (left) of $x$. Consider the action of the topological group $G=\{h^n:n\in\mathbb{Z}\}$ on $X$ given by $h^n.x=h^n(x)$ for every $n\in\mathbb{Z}$, every $x\in X$. Also define $f:X\rightarrow X$ by

\begin{center}
$f(x)=\left\{\begin{array}{ll} x &\mbox{if }\hspace{0.5mm} x\in\{-1,0,1\},\\ x^+ &\mbox{if }\hspace{0.5mm} x\in X\setminus\{-1,0,1\}.\end{array}\right.$
\end{center}
Then $G_f(x)=G(x)\cup G(f(x))=X\setminus\{-1,0,1\}$ for every $x\in X\setminus\{-1,0,1\}$, which is dense in $X$. Note that any open set containing $0$ contains points of the form $\pm 1/n$. Similarly, any open set containing $-1$ (or $1$) contains points of the form $-(1-1/n)$ (or $1-1/n$). Therefore $G_f$-orbit of every open set in $X$ is dense in $X$ implying that $f$ is $G$-transitive. On the other hand, if $U=\{2/3\}$ and $V=\{5/6\}$, then $h^k.(f^2)^n(U)\cap V=\emptyset$ for every $n\in\mathbb{N}$ and every $k\in\mathbb{Z}$ implying that $f^2$ is not $G$-transitive.
\end{ex}

The above example motivates the following definition of total $G$-transitivity.

\begin{Def}
Let $X$ be a $G$-space and $f:X\rightarrow X$ be continuous. Then $f$ is said to be \textit{totally $G$-transitive} if $f^n$ is $G$-transitive for every $n\geqslant 1$.
\end{Def}

One can observe that under the trivial action of $G$ on $X$, notions of total transitivity and total $G$-transitivity coincide. Under a non-trivial action of $G$ on $X$, every totally transitive map is totally $G$-transitive but the converse is not true as justified by the following example.

\begin{ex}\label{ex}
Let $S^1$ denote the unit circle in the complex plane. Consider $X=T^n=S^1\times S^1\times\dots\times S^1$ ($n$-dimensional Torus) with standard topology and topological group $G=T^m$, where $m<n$. Denoting $e^{2\pi\iota\theta}$ in $S^1$ by its argument $\theta\in[0,1]$, define the action of $G$ on $X$ by $(g_1,g_2,\dots,g_m).(\theta_1,\theta_2,\dots,\theta_m,\theta_{m+1},\dots,\theta_n)=(\theta_1+g_1,\theta_2+g_2,\dots,\theta_m+g_m,\theta_{m+1},\dots,\theta_n)$, where $(g_1,g_2,\dots,g_m)\in G$. Define $f:X\rightarrow X$ by $f(\theta_1,\theta_2,\dots,\theta_m,\theta_{m+1},\dots,\theta_n)=(\theta_1,\theta_2,\dots,\theta_m,\theta_{m+1}+\beta_{m+1},\dots,\theta_n+\beta_n)$, where $\{\beta_{m+1},\beta_{m+2},\dots,\beta_n\}$ is rationally independent (i.e. $\{\beta_{m+1},\beta_{m+2},\dots,\\\beta_n,1\}$ is linearly independent over $\mathbb{Q}$). Then we can find $h_m\in\mathbb{R}$ such that $h_m\notin\mbox{span}\{\beta_{m+1},\dots,\beta_n,1\}$ (over $\mathbb{Q}$) so that the set $\{h_m,\beta_{m+1},\dots,\beta_n,1\}$ becomes linearly independent over $\mathbb{Q}$. Continuing like this we can find $h_1,h_2,\dots,h_m$ in $\mathbb{R}$ such that $\{h_1,h_2,\dots,h_m,\beta_{m+1},\dots,\beta_n,1\}$ is linearly independent over $\mathbb{Q}$. For $(\theta_1,\theta_2,\dots,\theta_n)\in X$, $G_f(\theta_1,\theta_2,\dots,\theta_n)\supseteq O_f(\theta_1+h_1,\dots,\theta_m+h_m,\theta_{m+1},\dots,\theta_n)$, which is dense in $X$, by \cite[(1.14)]{JD}. Therefore $G_f$-orbit of every point in $X$ is dense in $X$ implying that $f$ is $G$-transitive. Similarly, $f^2$ is given by $f^2(\theta_1,\theta_2,\dots,\theta_m,\dots,\\\theta_n)=(\theta_1,\theta_2,\dots,\theta_m,\theta_{m+1}+2\beta_{m+1},\dots,\theta_n+2\beta_n)$, which is $G$-transitive since the set $\{2\beta_{m+1},\dots,2\beta_n,1\}$ is also linearly independent over $\mathbb{Q}$. Thus continuing like this we get $f^k$ is $G$-transitive for every $k\geqslant 1$ and hence $f$ is totally $G$-transitive. However, $f$ is not totally transitive. For if $n=2$ and $m=1$, then the map $f$ is given by $f(\theta_1,\theta_2)=(\theta_1,\theta_2+\beta_2)$, where $\beta_2$ is irrational. Note that for $U_1=V_1=\{\theta:1/8<\theta<1/6\}$ (i.e. open arc joining $(\cos\frac{\pi}{4},\sin\frac{\pi}{4})$ and $(\cos\frac{\pi}{3},\sin\frac{\pi}{3})$) and $U_2=V_2=\{\theta:5/8<\theta<2/3\}$, $f^k(U_1\times V_1)\cap (U_2\times V_2)=\emptyset$ for every $k\in\mathbb{N}$ implying that $f$ is not transitive.
\end{ex}

\begin{Def} \cite{TC}
Let $X$ be a $G$-space and $f:X\rightarrow X$ be continuous. Then $f$ is said to be \textit{strongly $G$-mixing} if for any pair of nonempty open subsets $U$, $V$ of $X$, there exists $N\in\mathbb{N}$ such that for all $n\geqslant N$, there is $g_n\in G$ such that
\begin{center}
$g_n.f^n(U)\cap V\ne\emptyset$.
\end{center}
\end{Def}

Note that under the trivial action of $G$ on $X$, notions of strongly $G$-mixing and strongly mixing coincide. In general, under a non-trivial action of $G$ on $X$, a strongly mixing map is strongly $G$-mixing but the converse is not true as justified by the following example.

\begin{ex}\label{mixing}
Consider $X=[-1,1]$ with relative topology of $\mathbb{R}$ and the action of additive group of integers modulo 2, $G=\mathbb{Z}_2=\{0,1\}$ with discrete topology on $X$, given by $0.x=x$, $1.x=-x$, $x\in X$. Define $f:X\rightarrow X$ by
\begin{center}
$f(x)=\left\{\begin{array}{ll} -2x-2 &\mbox{if }\hspace{0.5mm} -1\leqslant x\leqslant -1/2,\\ \hspace{0.5mm}2x &\mbox{if }\hspace{0.5mm} -1/2<x<1/2,\\ -2x+2 &\mbox{if }\hspace{1mm} 1/2\leqslant x\leqslant 1.\end{array}\right.$
\end{center}
Then one can observe that for $U=(-1/2,0)$ and $V=(0,1/2)$, $f^n(U)\cap V=\emptyset$ for every $n\in\mathbb{N}$ which implies that $f$ is not transitive and hence not strongly mixing. We show that $f$ is strongly $G$-mixing. Let $U$, $V\subseteq [-1,1]$ be nonempty open sets, $(a,b)\subseteq U\cap [0,1]$ and $(c,d)\subseteq U\cap [-1,0]$ so that at least one of $(a,b)$ or $(c,d)$ is nonempty. Suppose $(a,b)$ is nonempty. Then one can note that $f^k(a,b)=[0,1]$ for some $k\in\mathbb{N}$ so that if $V\cap [0,1]\ne\emptyset$, then $0.f^n(U)\cap V\ne\emptyset$ for all $n\geqslant k$. On the other hand if $V\subseteq [-1,0)$, then $1.f^n(U)\cap V\ne\emptyset$ for all $n\geqslant k$ and hence $f$ is strongly $G$-mixing.
\end{ex}

Recall that if $X$ is a $G$-space, then $X\times X$ is a $G\times G$-space under the action $(g,h).(x,y)=(g.x,h.y)$, for $(g,h)\in G\times G$, $(x,y)\in X\times X$.

\vspace{2mm}
Next we define the notion of weakly $G$-mixing for continuous self maps on $G$-spaces and study its relation with strongly $G$-mixing and total $G$-transitivity.

\begin{Def}
Let $X$ be a $G$-space and $f:X\rightarrow X$ be continuous. Then $f$ is said to be \textit{weakly $G$-mixing} if the Cartesian product $f\times f$ is $G\times G$-transitive, i.e. for every pair $U\times V$, $E\times F$ of nonempty basic open subsets of $X\times X$, there exist $(g,h)\in G\times G$ and $k\in\mathbb{N}$ such that
\begin{center}
$(g,h).(f\times f)^k(U\times V)\cap (E\times F)\ne\emptyset$
\end{center}
equivalently,
\begin{center}
$g.f^k(U)\cap E\ne\emptyset\hspace{4mm}$ and $\hspace{4mm}h.f^k(V)\cap F\ne\emptyset$.
\end{center}
\end{Def}

Every strongly $G$-mixing map is weakly $G$-mixing follows from the following result.

\begin{prop}
Let $X$ be a $G$-space and $f:X\rightarrow X$ be continuous. If $f$ is strongly $G$-mixing, then it is weakly $G$-mixing.
\end{prop}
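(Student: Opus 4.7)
The plan is to exploit the uniformity of strongly $G$-mixing, namely that the conclusion $g_n.f^n(U)\cap V\ne\emptyset$ holds for \emph{all} $n$ past some threshold $N$, which lets us synchronize the iterate index $k$ across the two coordinates of $X\times X$.

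Concretely, I would start with arbitrary nonempty basic open sets $U\times V$ and $E\times F$ in $X\times X$ and project to each factor. Applying the strongly $G$-mixing hypothesis to the pair $(U,E)$ produces $N_1\in\mathbb{N}$ such that for every $n\geqslant N_1$ there exists $g_n\in G$ with $g_n.f^n(U)\cap E\ne\emptyset$. Applying it again to the pair $(V,F)$ produces $N_2\in\mathbb{N}$ such that for every $n\geqslant N_2$ there exists $h_n\in G$ with $h_n.f^n(V)\cap F\ne\emptyset$.

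Next I would choose any single $k\geqslant\max\{N_1,N_2\}$ and set $g=g_k$, $h=h_k$. Then simultaneously $g.f^k(U)\cap E\ne\emptyset$ and $h.f^k(V)\cap F\ne\emptyset$, which is exactly the definition of $(g,h).(f\times f)^k(U\times V)\cap(E\times F)\ne\emptyset$ for $(g,h)\in G\times G$. Hence $f\times f$ is $G\times G$-transitive and $f$ is weakly $G$-mixing.

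There is no real obstacle here; the only conceptual point to make explicit is why strongly $G$-mixing, rather than mere $G$-transitivity, is needed: $G$-transitivity alone would only supply a single $k_1$ witnessing $(U,E)$ and a possibly different $k_2$ witnessing $(V,F)$, with no way to force $k_1=k_2$. The cofinite nature of the set of admissible $n$ in the strongly $G$-mixing definition is precisely what removes this obstruction, so the proof amounts to invoking the hypothesis twice and intersecting the two cofinite sets of good exponents.
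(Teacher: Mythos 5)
Your proof is correct and follows exactly the same route as the paper's: apply the strongly $G$-mixing hypothesis separately to the pairs $(U,E)$ and $(V,F)$, take $k\geqslant\max\{N_1,N_2\}$, and use the resulting pair $(g_k,h_k)\in G\times G$ as the witness. Your additional remark on why the cofinite set of good exponents is what allows the synchronization is a helpful clarification but does not change the argument.
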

\begin{proof}
Let $U\times V$, $E\times F$ be nonempty basic open subsets of $X\times X$. Since $f$ is strongly $G$-mixing, there exist $N_1$, $N_2\in\mathbb{N}$ such that for all $n\geqslant N_1$, there is $g_n\in G$ such that $g_n.f^n(U)\cap E\ne\emptyset$ and for all $m\geqslant N_2$, there is $h_m\in G$ such that $h_m.f^m(V)\cap F\ne\emptyset$. Choosing $N=\max\{N_1,N_2\}$ we get the required result.
\end{proof}

Note that under the trivial action of $G$ on $X$, the notion of weakly $G$-mixing coincides with that of weakly mixing. In general, under a non-trivial action of $G$ on $X$, a weakly mixing map is weakly $G$-mixing but the converse is not true as shown in the following example.

\begin{ex}
Consider $G$, $X$ and $f$ as given in Example \ref{mixing}. Note that the map $f$ being strongly $G$-mixing is weakly $G$-mixing. However, for $U\times V=(0,1/2)\times (0,1/2)$ and $E\times F=(-1/2,0)\times (-1/2,0)$, $(f\times f)^k(U\times V)\cap (E\times F)=\emptyset$ for every $k\in\mathbb{N}$ implying that $f$ is not weakly mixing.
\end{ex}

The following result shows that every strongly $G$-mixing map is totally $G$-transitive.

\begin{prop}
Let $X$ be a $G$-space and $f:X\rightarrow X$ be continuous. If $f$ is strongly $G$-mixing, then it is totally $G$-transitive.
\end{prop}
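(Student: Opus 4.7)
The plan is to unravel definitions: to show $f$ is totally $G$-transitive, I must verify that $f^n$ is $G$-transitive for every $n\geqslant 1$, which means that for every pair of nonempty open sets $U,V\subseteq X$ there exist $g\in G$ and $k\in\mathbb{N}$ with $g.(f^n)^k(U)\cap V\neq\emptyset$, i.e.\ $g.f^{nk}(U)\cap V\neq\emptyset$.

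First I would fix an arbitrary integer $n\geqslant 1$ and an arbitrary pair of nonempty open subsets $U$, $V$ of $X$. Applying the strongly $G$-mixing hypothesis to this same pair $U$, $V$, I obtain an integer $N\in\mathbb{N}$ such that for every $m\geqslant N$ there is some $g_m\in G$ with $g_m.f^m(U)\cap V\neq\emptyset$. The key observation is that the conclusion of strongly $G$-mixing is guaranteed for \emph{all} sufficiently large iterates, not just one, so in particular it holds along the arithmetic progression of multiples of $n$.

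Concretely, choose any $k\in\mathbb{N}$ large enough that $nk\geqslant N$ (for instance, $k=N$ suffices). Setting $m=nk$ and $g=g_{nk}$, the strongly $G$-mixing property yields $g.f^{nk}(U)\cap V\neq\emptyset$, which is precisely $g.(f^n)^k(U)\cap V\neq\emptyset$. This shows that $N_g(U,V)\neq\emptyset$ for the map $f^n$, so $f^n$ is $G$-transitive. Since $n\geqslant 1$ was arbitrary, $f$ is totally $G$-transitive.

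I do not expect any real obstacle here: the argument is a direct analogue of the classical fact that strongly mixing implies totally transitive, and the presence of the group elements $g_m$ from the $G$-action does not interfere because we are allowed to choose a different $g\in G$ for each iterate. The only care needed is to use the \emph{uniform} nature of strongly $G$-mixing (the conclusion holds for every $m\geqslant N$, rather than merely for one $m$) to guarantee that we can land on an index of the form $nk$.
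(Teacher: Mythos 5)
Your proposal is correct and follows essentially the same argument as the paper: both fix $n$ and $U,V$, invoke strongly $G$-mixing to get the threshold $N$, and then select a multiple $nk\geqslant N$ so that the witness $g_{nk}$ gives $g_{nk}.(f^n)^k(U)\cap V\neq\emptyset$. No issues.
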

\begin{proof}
Let $m\in\mathbb{N}$ and $U$, $V$ be nonempty open subsets of $X$. Since $f$ is strongly $G$-mixing, there exists $N\in\mathbb{N}$ such that for all $n\geqslant N$, there is $g_n\in G$ such that $g_n.f^n(U)\cap V\ne\emptyset$. Let $k$ be the smallest multiple of $m$ greater than $N$. Then $g_k.(f^m)^{k/m}(U)\cap V\ne\emptyset$, $g_k\in G$ which proves that $f^m$ is $G$-transitive.
\end{proof}

Next we show that every pseudoequivariant weakly $G$-mixing map is totally $G$-transitive. We first prove the following useful lemma.

\begin{lemma}\label{l1}
Let $X$ be a $G$-space and $f:X\rightarrow X$ be pseudoequivariant. If $f$ is weakly $G$-mixing, then $f\times f\times\dots\times f$ ($n$-times) is $G\times G\times\dots\times G$ ($n$-times) transitive for every $n\in\mathbb{N}$.
\end{lemma}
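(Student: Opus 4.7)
The plan is to proceed by induction on $n$. The case $n=1$ is just $G$-transitivity of $f$, an immediate consequence of weakly $G$-mixing, and $n=2$ is the definition of weakly $G$-mixing itself. So assume the conclusion for some $n\geqslant 2$ and consider nonempty basic open sets $U_1\times\cdots\times U_{n+1}$ and $V_1\times\cdots\times V_{n+1}$ in $X^{n+1}$.

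The key preparatory step is to use weak $G$-mixing to ``slave'' the $(n+1)$-th coordinate to the $n$-th. Applied to the pair of basic open sets $U_n\times V_n$ and $U_{n+1}\times V_{n+1}$ in $X\times X$, it furnishes $k_1\in\mathbb{N}$ and $(g_n,h_n)\in G\times G$ with $g_n\cdot f^{k_1}(U_n)\cap U_{n+1}\neq\emptyset$ and $h_n\cdot f^{k_1}(V_n)\cap V_{n+1}\neq\emptyset$. Setting $U_n'=U_n\cap f^{-k_1}(g_n^{-1}\cdot U_{n+1})$ and $V_n'=V_n\cap f^{-k_1}(h_n^{-1}\cdot V_{n+1})$ gives two nonempty open sets designed so that any $x\in U_n'$ forces $g_n\cdot f^{k_1}(x)\in U_{n+1}$, and any $y\in V_n'$ forces $h_n\cdot f^{k_1}(y)\in V_{n+1}$.

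Invoking the inductive hypothesis on the $n$-tuples $(U_1,\dots,U_{n-1},U_n')$ and $(V_1,\dots,V_{n-1},V_n')$ yields a common iterate $k_2\in\mathbb{N}$, group elements $(\gamma_1,\dots,\gamma_n)\in G^n$, and points $x_i\in U_i$ for $i<n$, $x_n\in U_n'$, with $\gamma_i\cdot f^{k_2}(x_i)\in V_i$ for $i<n$ and $\gamma_n\cdot f^{k_2}(x_n)\in V_n'$. Put $x_{n+1}=g_n\cdot f^{k_1}(x_n)\in U_{n+1}$. If I can find $\gamma_{n+1}\in G$ with $\gamma_{n+1}\cdot f^{k_2}(x_{n+1})\in V_{n+1}$, then $k_2$ and $(\gamma_1,\dots,\gamma_{n+1})$ witness $G^{\times(n+1)}$-transitivity on the chosen basic open sets, closing the induction.

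The main obstacle --- and the only place where pseudoequivariance is essential --- is producing this $\gamma_{n+1}$. From $\gamma_n\cdot f^{k_2}(x_n)\in V_n'$ we already know $h_n\cdot f^{k_1}(\gamma_n\cdot f^{k_2}(x_n))\in V_{n+1}$, and I need to transfer this into a statement about $f^{k_2}(x_{n+1})=f^{k_2}(g_n\cdot f^{k_1}(x_n))$. Because $f$ (and hence each iterate) is pseudoequivariant, $f^k(G(y))=G(f^k(y))$ for every $y\in X$ and $k\in\mathbb{N}$, so the two expressions $f^{k_2}(g_n\cdot f^{k_1}(x_n))$ and $f^{k_1}(\gamma_n\cdot f^{k_2}(x_n))$ both lie in the single orbit $G(f^{k_1+k_2}(x_n))$. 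Writing them as $a\cdot f^{k_1+k_2}(x_n)$ and $b\cdot f^{k_1+k_2}(x_n)$ for suitable $a,b\in G$, the choice $\gamma_{n+1}:=h_nba^{-1}$ converts the known $h_nb\cdot f^{k_1+k_2}(x_n)\in V_{n+1}$ into $\gamma_{n+1}\cdot f^{k_2}(x_{n+1})\in V_{n+1}$, finishing the inductive step.
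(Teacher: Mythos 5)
Your proof is correct and follows essentially the same route as the paper: the key move in both is to pull one pair of open sets back through the weak-mixing time via $f^{-k}$, recurse on fewer pairs, and then use pseudoequivariance (in the form $f^k(G(y))=G(f^k(y))$) to recover the group element for the absorbed coordinate. The only difference is organizational --- the paper phrases the induction as nonemptiness of $\bigcap_{i}N_{g_i}(U_i,V_i)$ after first extracting $G$-transitivity of $f$, whereas you induct directly on the number of product factors --- and your explicit bookkeeping of $\gamma_{n+1}=h_nba^{-1}$ makes the pseudoequivariance step more transparent than the paper's ``for some $g_0'\in G$''.
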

\begin{proof}
We first show that $G\times G$-transitivity of $f\times f$ implies $G$-transitivity of $f$. Let $U$, $V$ be nonempty open subsets of $X$. Since $f$ is weakly $G$-mixing, there exist $(g,h)\in G\times G$ and $k\in\mathbb{N}$ such that $(g,h).(f\times f)^k(U\times X)\cap (V\times X)\ne\emptyset$ so that $g.f^k(U)\cap V\ne\emptyset$. Therefore for every pair of nonempty open subsets $U$, $V$ of $X$, there exists $g\in G$ such that $N_g(U,V)\ne\emptyset$. Now let $U_1$, $U_2$, $V_1$, $V_2$ be nonempty open subsets of $X$. Again $f$ being weakly $G$-mixing, there exist $(g,h)\in G\times G$ and $k\in\mathbb{N}$ such that $(g,h).(f\times f)^k(U_1\times V_1)\cap (U_2\times V_2)\ne\emptyset$ so that $g.f^k(U_1)\cap U_2\ne\emptyset$ and $h.f^k(V_1)\cap V_2\ne\emptyset$. Then $A=U_1\cap g'.f^{-k}(U_2)$ and $B=V_1\cap h'.f^{-k}(V_2)$ are nonempty open subsets of $X$ for some $g'$, $h'\in G$ (using pseudoequivariancy of $f$) which gives $g_0\in G$ such that $N_{g_0}(A,B)\ne\emptyset$. Let $n\in N_{g_0}(A,B)$, it follows that $g_0.f^n(U_1)\cap V_1\ne\emptyset$ and $g_0'.f^n(U_2)\cap V_2\ne\emptyset$ for some $g_0'\in G$ implying that $N_{g_0}(U_1,V_1)\cap N_{g_0'}(U_2,V_2)\ne\emptyset$. Thus by induction we have $\cap_{i=1}^nN_{g_i}(U_i,V_i)\ne\emptyset$ for some $g_1,g_2,\dots,g_n\in G$ which in turn implies that for any finite collection of nonempty open subsets $U_1$, $U_2,\dots, U_n$, $V_1$, $V_2,\dots, V_n$ of $X$, there exists $(g_1,g_2,\dots,g_n)\in G\times G\times\dots\times G$ such that $N_{(g_1,g_2,\dots,g_n)}(U_1\times U_2\times\dots\times U_n,V_1\times V_2\times\dots\times V_n)=\cap_{i=1}^nN_{g_i}(U_i,V_i)\ne\emptyset$ for all $n\geqslant 1$. Hence the required result follows.
\end{proof}

\begin{prop}
Let $X$ be a $G$-space and $f:X\rightarrow X$ be pseudoequivariant. If $f$ is weakly $G$-mixing, then $f$ is totally $G$-transitive.
\end{prop}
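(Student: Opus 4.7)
I would adapt the classical Furstenberg-style argument that weak mixing implies total transitivity, using Lemma \ref{l1} to supply $n$-fold $G$-transitivity and pseudoequivariance to pull group elements back across iterates of $f$. Fix $n\geq 2$ (the case $n=1$ is exactly the first paragraph of the proof of Lemma \ref{l1}) and nonempty open sets $U,V\subseteq X$; the goal is to produce $g\in G$ and $m\geq 1$ with $g.(f^n)^m(U)\cap V\neq\emptyset$.

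\textbf{Construction.} I would apply Lemma \ref{l1} to the basic open sets $U\times U\times\cdots\times U$ and $V\times f^{-1}(V)\times\cdots\times f^{-(n-1)}(V)$ in $X^n$, obtaining $k\in\mathbb{N}$ and $g_0,\ldots,g_{n-1}\in G$ such that $g_i.f^k(U)\cap f^{-i}(V)\neq\emptyset$ for each $i=0,\ldots,n-1$. Choose $u_i\in U$ with $f^i(g_i.f^k(u_i))\in V$. A one-line induction on $i$ (using $f(G(x))=G(f(x))$) shows that every iterate $f^i$ is pseudoequivariant, so $f^i(g_i.f^k(u_i))=h_i.f^{k+i}(u_i)$ for some $h_i\in G$. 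Thus $h_i.f^{k+i}(U)\cap V\neq\emptyset$ for every $i\in\{0,\ldots,n-1\}$. Among the $n$ consecutive integers $k,k+1,\ldots,k+n-1$ one is divisible by $n$, say $k+j=mn$ with $m\geq 1$ (which is automatic since $k\geq 1$). Then $h_j.(f^n)^m(U)\cap V\neq\emptyset$, and $f^n$ is $G$-transitive.

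\textbf{Main obstacle.} The one delicate point is ensuring that each $f^{-i}(V)$ in the construction is nonempty so that Lemma \ref{l1} applies; openness is automatic from continuity. I would handle it by combining two facts: (i) $f$ is $G$-transitive (which Lemma \ref{l1} itself establishes by taking one factor of the product to be all of $X$), and (ii) $f^i(X)$ is $G$-invariant for every $i$, which is a short consequence of pseudoequivariance. Applying (i) to the pair $(X,V)$ gives $g\in G$ and $k'\geq 1$ with $g.f^{k'}(X)\cap V=f^{k'}(X)\cap V\neq\emptyset$, so $f^{-k'}(V)\neq\emptyset$. The inclusion $f(f^{-(j+1)}(V))\subseteq f^{-j}(V)$ propagates nonemptiness down to all $i\leq k'$, and iterating the same argument with $f^{-k'}(V)$ in place of $V$ pushes $k'$ arbitrarily high, so $f^{-i}(V)\neq\emptyset$ for every $i\in\mathbb{N}$.
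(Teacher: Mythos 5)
Your argument is correct and is essentially the contrapositive of the paper's own proof: both rest on Lemma \ref{l1} applied to the product open sets $U\times\cdots\times U$ and $V\times f^{-1}(V)\times\cdots\times f^{-(n-1)}(V)$, followed by the observation that among $n$ consecutive exponents one is divisible by $n$, with pseudoequivariance used to move group elements past iterates of $f$. Your only genuine addition is the explicit check that each $f^{-i}(V)$ is nonempty (via $G$-transitivity of $f$ and the $G$-invariance of $f^{i}(X)$); the paper's proof needs this too but leaves it unaddressed, and your verification of it is sound.
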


\begin{proof}
Suppose that $f^m$ is not $G$-transitive for some $m>1$. Then there exists a subset $F$ of $X$, which is nonempty, proper, closed, $G$-invariant, $+f^m$ invariant and hence $+f^{mn}$ invariant for any $n\geqslant 1$ such that int$(F)\ne\emptyset$. This implies that $f^{mn}$ is not $G$-transitive for any $n\geqslant 1$. Therefore for any given $n\geqslant 1$, there exist nonempty open subsets $U_n$, $V_n$ of $X$ such that for every $g\in G$ and every $p\geqslant 1$ we have $g.(f^{mn})^p(U_n)\cap V_n=\emptyset$. Note that same $U_1$, $V_1$ will work for all $n$, so without loss of generality we can assume that $U$, $V$ are nonempty open subsets of $X$ such that $g.f^{mk}(U)\cap V=\emptyset$ for every $g\in G$ and every $k\geqslant 1$. Since $f$ is pseudoequivariant, $U\cap g.f^{-mk}(V)=\emptyset$ for every $g\in G$ and every $k\geqslant 1$. We claim that $f\times f\times\dots\times f$ ($m$-times) is not $G\times G\times\dots\times G$ ($m$-times) transitive. Consider the sets $V'=V\times f^{-1}(V)\times\dots\times f^{-(m-1)}(V)$ and $U'=U\times U\times\dots\times U$. Then $U'\cap (g_1,g_2,\dots,g_m).(f\times f\times\dots\times f)^{-r}(V')=\emptyset$ for every $(g_1,g_2,\dots,g_m)\in G\times G\times\dots\times G$ and every $r\geqslant 1$ which gives a contradiction to the Lemma \ref{l1}. Thus $f^m$ is $G$-transitive for every $m\geqslant 1$.
\end{proof}

\begin{rmk}
A totally $G$-transitive map need not be weakly $G$-mixing as illustrated in the following example.
\end{rmk}

\begin{ex}\label{imp}
Consider the action of $G=\mathbb{Z}_2$ on $S^1$ with standard topology given by $0.\theta=\theta$, $1.\theta=-\theta$, $\theta\in S^1$ and irrational rotation on $S^1$ given by $f(\theta)=\theta+\alpha$. Then $f$ is totally $G$-transitive. However, $f$ is not weakly $G$-mixing, for this take open sets $U=\{\theta:1/12<\theta<1/8\}$, $V_1=\{\theta:1/6<\theta<1/4\}$, $V_2=\{\theta:5/12<\theta<1/2\}$ of $S^1$ and the basic open subsets $U\times U$ and $V_1\times V_2$ of $S^1\times S^1$. Now if $0.f^{n_1}(U)\cap V_1\ne\emptyset$ for some $n_1\in\mathbb{N}$, then $f$ being an isometry, $0.f^{n_1}(U)\cap V_2=\emptyset$ and $1.f^{n_1}(U)\cap V_2=\emptyset$. Similarly, if $1.f^{n_2}(U)\cap V_1\ne\emptyset$ for some $n_2\in\mathbb{N}$, then $g.f^{n_2}(U)\cap V_2=\emptyset$ for every $g\in\mathbb{Z}_2$. Thus $f$ is not weakly $G$-mixing.
\end{ex}

\begin{Def}\cite{TC}
Let $X$ be a $G$-space and $f:X\rightarrow X$ be continuous. Then $x\in X$ is said to be \textit{$G_f$-periodic point} of $f$ if there exist $g\in G$ and $k\in\mathbb{N}$ such that $g.f^k(x)=x$. The smallest such $k$ is called \textit{$G_f$-prime period} of $x$.
\end{Def}

\begin{rmk}
Note that every periodic point of a self map $f$ on a $G$-space $X$ is a $G_f$-periodic point of $f$ which implies that if the set of periodic points of $f$ is dense in $X$, then the set of $G_f$-periodic points of $f$ is also dense in $X$. However, in Example \ref{ex1}, every point is a $G_f$-periodic point of $f$ but $Per(f)=\{-1,0,1\}$.
\end{rmk}

Next result gives a sufficient condition for a totally $G$-transitive map to be weakly $G$-mixing.

\begin{prop}\label{p3}
Let $X$ be a $G$-space and $f:X\rightarrow X$ be pseudoequivariant and totally $G$-transitive with dense set of $G_f$-periodic points. Then $f$ is weakly $G$-mixing.
\end{prop}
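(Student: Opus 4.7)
I would adapt the classical proof of Banks' theorem (totally transitive plus dense periodic points implies weakly mixing) to the $G$-equivariant setting. Fix nonempty basic open subsets $U_1 \times V_1$ and $U_2 \times V_2$ of $X \times X$; the aim is to produce $n \in \mathbb{N}$ and $(g, h) \in G \times G$ such that $g.f^n(U_1) \cap U_2 \neq \emptyset$ and $h.f^n(V_1) \cap V_2 \neq \emptyset$ simultaneously.

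The strategy is to build arithmetic progressions inside the ``$G$-return sets'' $N_G(U, V) := \{n \in \mathbb{N} : \exists g \in G,\ g.f^n(U) \cap V \neq \emptyset\}$ and then extract a common element. First, $G$-transitivity of $f$ (the $n=1$ case of total $G$-transitivity) produces $g_1 \in G$ and $s \in \mathbb{N}$ making $A := U_1 \cap f^{-s}(g_1^{-1}.U_2)$ nonempty open, and density of $G_f$-periodic points then yields $p \in A$ of $G_f$-prime period $k_p$, say $c.f^{k_p}(p) = p$. The key identity $f^{k_p}(G(p)) = G(f^{k_p}(p)) = G(p)$ (pseudoequivariance combined with the periodicity relation), iterated, gives $f^{jk_p}(p) \in G(p)$ and hence $f^{s+jk_p}(p) \in G(f^s(p)) \subseteq G.U_2$ for every $j \geq 0$, placing the whole arithmetic progression $s + k_p\mathbb{N}_0$ inside $N_G(U_1, U_2)$. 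An analogous step for $(V_1, V_2)$, using $G$-transitivity of $f^{k_p}$ (from total $G$-transitivity) so that the initial return time is a multiple of $k_p$, locates a $G_f$-periodic $q \in V_1 \cap f^{-tk_p}(g_2^{-1}.V_2)$ of prime period $k_q$ and yields $tk_p + k_q\mathbb{N}_0 \subseteq N_G(V_1, V_2)$.

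The principal obstacle is the Diophantine synchronization of the two progressions: solvability of $s + jk_p = tk_p + lk_q$ in nonnegative $j, l$ reduces to $\gcd(k_p, k_q) \mid s$, which is not automatic. My remedy is to redo the first step using $G$-transitivity of $f^{k_p k_q}$ (available from total $G$-transitivity), forcing the new $s$ to be a multiple of $k_q$ and hence of $\gcd(k_p, k_q)$; once synchronized, any sufficiently large multiple of $\mathrm{lcm}(k_p, k_q)$ lies in both progressions and serves as $n$, while the witnessing group elements $g, h$ are read off from $f^n(p) \in G.U_2$ and $f^n(q) \in G.V_2$. A cleaner alternative that bypasses this bookkeeping is to pass to the quotient dynamical system $\bar f : X/G \to X/G$, well-defined by pseudoequivariance: the hypotheses transfer to total transitivity and dense periodic points of $\bar f$, the classical Banks theorem then gives weak mixing of $\bar f$, and this pulls back to weak $G$-mixing of $f$ via the open, surjective orbit map $p$.
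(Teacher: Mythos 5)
Your first route, as written, has a circular step. The periods $k_p$ and $k_q$ are outputs of the construction (they are the $G_f$-prime periods of points found inside sets that depend on the earlier choices), so the proposed remedy of ``redoing the first step using $G$-transitivity of $f^{k_pk_q}$'' does not close the argument: the new open set $U_1\cap f^{-s'}(g^{-1}.U_2)$ forces you to pick a \emph{new} periodic point, whose period $k_{p'}$ need not equal $k_p$, and the synchronization condition $\gcd(k_{p'},k_{q'})\mid s'$ then involves quantities that again postdate the choice of $s'$. The paper's proof (the standard Banks-type argument) sidesteps the whole Diophantine issue by using only \emph{one} periodic point: after finding $x\in W=U\cap f^{-k}(g_1^{-1}.E)$ of $G_f$-prime period $m$, it applies $G$-transitivity of $f^m$ to the pair $\bigl(V,\,f^{-k}(F)\bigr)$ rather than to $(V,V_2)$, so the return time for the second pair is automatically of the form $mj+k$, i.e.\ already lies in the arithmetic progression $k+m\mathbb{N}_0\subseteq N_G(U,E)$ generated by $x$. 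No second periodic point and no gcd bookkeeping are needed; if you want to salvage your first route, replace the target $V_2$ by $f^{-s}(g^{-1}.V_2)$ in the second application of transitivity.

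Your ``cleaner alternative'' via the orbit space is correct and is a genuinely different route from the paper's. All the transfers you need do go through: $p$ is open and surjective with $p\circ f=\bar f\circ p$, so $G$-transitivity of $f^n$ applied to the $G$-invariant sets $p^{-1}(\bar U)$, $p^{-1}(\bar V)$ gives transitivity of $\bar f^{\,n}$; a $G_f$-periodic point $x$ with $g.f^k(x)=x$ gives $\bar f^{\,k}(G(x))=G(x)$, so $\bar f$ has dense periodic points; the classical theorem then yields weak mixing of $\bar f$; and since $\bar f^{\,k}(p(U))\cap p(E)\ne\emptyset$ is equivalent to $g.f^k(U)\cap E\ne\emptyset$ for some $g\in G$, weak mixing of $\bar f$ pulls back to weak $G$-mixing of $f$ with a single common time $k$. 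The one point to make explicit is that $X/G$ can be a badly non-Hausdorff, non-metrizable space, so you must invoke the version of Banks' theorem valid for arbitrary topological spaces; its proof is purely combinatorial with open sets (indeed it is the paper's own proof with $G$ trivial), so this is fine, but it should be said. What the quotient approach buys is a one-line reduction to a known theorem; what the paper's direct approach buys is self-containedness and an argument that visibly isolates where pseudoequivariance is used.
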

\begin{proof}
Let $U\times V$, $E\times F$ be nonempty basic open subsets of $X\times X$. Since $f$ is $G$-transitive, there exist $g_1\in G$ and $k\in\mathbb{N}$ such that $g_1.f^k(U)\cap E\ne\emptyset$ which implies that the set $W=U\cap f^{-k}(g_1^{-1}.E)$ is open and nonempty. Then the set of $G_f$-periodic points being dense, there exists a $G_f$-periodic point $x$ in $W$, say of $G_f$-prime period $m$, such that $g_0.f^m(x)=x$ for some $g_0\in G$. Now since $f^{-k}(F)$ is open, nonempty and $f^m$ is $G$-transitive, there exist $g_2\in G$ and $j\in\mathbb{N}$ such that $g_2.f^{mj}(V)\cap f^{-k}(F)\ne\emptyset$. Then $f$ being pseudoequivariant, $h.f^{mj+k}(V)\cap F\ne\emptyset$ for some $h\in G$. Now using pseudoequivariancy of $f$ and $G_f$-periodicity of $x$ repeatedly we get $f^{mj}(x)=h_0.x$ for some $h_0\in G$ which in turn gives $g.f^{mj+k}(x)=g_1.f^k(x)\in E$ for some $g\in G$. Thus $g.f^{mj+k}(U)\cap E\ne\emptyset$ and hence $f$ is weakly $G$-mixing.
\end{proof}

\begin{rmk}
Note that the Example \ref{imp} justifies that in general, a totally $G$-transitive map need not be strongly $G$-mixing.
\end{rmk}

Recall that a topological space is said to be \textit{second countable} if it has a countable base and \textit{non-meager} if it is not the union of a countable family of nowhere dense subsets.

The following result shows that under certain conditions $G$-transitivity implies strongly $G$-mixing.

\begin{prop}
Let  $X$ be a second countable, non-meager $G$-space and $f:X\rightarrow X$ be pseudoequivariant and $G$-transitive with $G_f(x)$ dense in $X$ for some $x\in X$. If for each neighbourhood $W$ of $x$, there exists $N\in\mathbb{N}$ such that for all $n\geqslant N$, there is $g_n\in G$ such that $g_n.f^n(W)\cap W\ne\emptyset$, then $f$ is strongly $G$-mixing.
\end{prop}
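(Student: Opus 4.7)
The plan is to upgrade the local mixing condition at $x$ to strong $G$-mixing on arbitrary open pairs by pulling back to a common neighbourhood of $x$. Fix nonempty open $U,V\subseteq X$. Using density of $G_f(x)$, choose $g_1,g_2\in G$ and $k_1,k_2\geqslant 0$ with $g_1.f^{k_1}(x)\in U$ and $g_2.f^{k_2}(x)\in V$. Continuity of $T_{g_1}\circ f^{k_1}$ and $T_{g_2}\circ f^{k_2}$ at $x$ then produces an open neighbourhood $W$ of $x$ with $g_1.f^{k_1}(W)\subseteq U$ and $g_2.f^{k_2}(W)\subseteq V$. Applying the local mixing hypothesis to this $W$ furnishes $N\in\mathbb{N}$ such that for every $n\geqslant N$ there exist $g_n\in G$ and $y_n\in W$ with $g_n.f^n(y_n)\in W$.

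The next step transports these data to $U$ and $V$ using pseudoequivariance. A straightforward induction on the identity $f(G(z))=G(f(z))$ shows that for every $z\in X$, $g\in G$, $k\geqslant 0$ there exists some $g'\in G$ (depending on all three) with $f^k(g.z)=g'.f^k(z)$. Applied with $z=f^n(y_n)$, $g=g_n$, $k=k_2$, this yields $h\in G$ with $f^{k_2}(g_n.f^n(y_n))=h.f^{n+k_2}(y_n)$, and therefore $g_2h.f^{n+k_2}(y_n)=g_2.f^{k_2}(g_n.f^n(y_n))\in g_2.f^{k_2}(W)\subseteq V$. Now set $m:=n+k_2-k_1$; applying the same identity with $z=f^{k_1}(y_n)$, $g=g_1$, $k=m$ produces $h'\in G$ with $f^m(g_1.f^{k_1}(y_n))=h'.f^{n+k_2}(y_n)$. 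Taking $\tilde{g}_m:=g_2h(h')^{-1}\in G$ gives $\tilde{g}_m.f^m(g_1.f^{k_1}(y_n))=g_2h.f^{n+k_2}(y_n)\in V$, and since $g_1.f^{k_1}(y_n)\in U$ we conclude $\tilde{g}_m.f^m(U)\cap V\ne\emptyset$.

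As $n$ ranges over $\{N,N+1,\ldots\}$, the index $m=n+k_2-k_1$ covers all integers at least $N+k_2-k_1$, so $N':=\max\{1,N+k_2-k_1\}$ serves as the threshold witnessing that $f$ is strongly $G$-mixing. I expect the main obstacle to be the bookkeeping of group elements when commuting iterates of $f$ past translates: pseudoequivariance only gives $f(g.z)=g'.f(z)$ for some $g'$ depending on both $g$ and $z$, so each such commutation introduces a fresh, unnamed element of $G$, and one must verify that the final $\tilde{g}_m$ is produced inside $G$ itself (which it is, since $G$ is a group). Note that second countability and non-meagerness are not actually used in this argument; they are presumably included so that the density of $G_f(x)$ for some $x$ follows automatically from $G$-transitivity via a Baire-category residuality argument.
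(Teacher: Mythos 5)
Your proof is correct and follows essentially the same route as the paper's: both locate a neighbourhood $W$ of $x$ carried (modulo group elements) into $U$ and $V$ by $f^{k_1}$ and $f^{k_2}$, apply the recurrence hypothesis on $W$, and shift indices by $k_2-k_1$. The only cosmetic difference is that you shrink $W$ by forward continuity of $T_{g_i}\circ f^{k_i}$ where the paper intersects the pulled-back sets $h_i.f^{-k_i}(\cdot)$, and your explicit bookkeeping of the group elements arising from pseudoequivariance (as well as your remark that second countability and non-meagerness are never used once density of $G_f(x)$ is assumed) is, if anything, more careful than the original.
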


\begin{proof}
Let $U$, $V$ be nonempty open subsets of $X$. Then there exist $g_1$, $g_2\in G$ and $k_1$, $k_2\geqslant 0$ such that $g_1.f^{k_1}(x)\in U$ and $g_2.f^{k_2}(x)\in V$ implying that $x\in h_1.f^{-k_1}(U)\cap h_2.f^{-k_2}(V)=W$ (say) for some $h_1$, $h_2\in G$. Since $W$ is an open neighbourhood of $x$, there exists $N\in\mathbb{N}$ such that for all $n\geqslant N$, there is $g_n\in G$ such that $g_n.f^n(W)\cap W\ne\emptyset$. This gives $f^{k_2}(g_n.f^n(h_1.f^{-k_1}(U))\cap h_2.f^{-k_2}(V))\ne\emptyset$ which in turn implies that for all $n\geqslant N$, there is $h_n\in G$ such that $h_n.f^{n+k_2-k_1}(U)\cap V\ne\emptyset$. Hence $f$ is strongly $G$-mixing.
\end{proof}

\section{Minimality on $G$-spaces}

\begin{Def} \cite{T}
Let $X$ be a $G$-space and $f:X\rightarrow X$ be continuous. Then a nonempty, closed, $+f$ invariant, $G$-invariant subset $Y$ of $X$ is said to be a \textit{$G$-minimal} set of $f$ if $\overline{G_f(y)}=Y$ for every $y\in Y$. The map $f$ is said to be \textit{$G$-minimal} if $X$ itself is a $G$-minimal set.
\end{Def}

Note that under the trivial action of $G$ on $X$, concepts of minimality and $G$-minimality coincide. In general, under a non trivial action of $G$ on $X$, a minimal map is $G$-minimal but the converse is not true (refer Example \ref{ex}).

\begin{rmk}\label{c1}
Let $X$ be a $G$-space and $f:X\rightarrow X$ be continuous. Then one can observe that
\begin{enumerate}[$(a)$]
\item if $f$ is pseudoequivariant, then $f$ is $G$-minimal iff $X$ does not contain any nonempty, proper, closed, $+f$ invariant, $G$-invariant subset.
\item if $f$ is pseudoequivariant and $G$-minimal, then $f(X)$ is dense in $X$. If additionally, $X$ is compact and Hausdorff, then $f$ is onto.
\item if $f$ is $G$-minimal and $Y$ is a $+f$ invariant, $G$-invariant subset of $X$, then $f|_Y$ is also $G$-minimal.
\end{enumerate}
\end{rmk}

\begin{rmk}
One can observe that if $f\times h$ is $G\times G$-minimal, then $f$ and $h$ are $G$-minimal. Following example shows that the converse is not true.
\end{rmk}

\begin{ex}
Let $X=S^1\times S^1$ with standard topology and topological group $G=S^1$. Consider the action of $G$ on $X$ given by $g.(\theta_1,\theta_2)=(\theta_1+g,\theta_2)$, $g\in G$, $(\theta_1,\theta_2)\in X$. Define $f:X\rightarrow X$ by $f(\theta_1,\theta_2)=(\theta_1,\theta_2+\pi/4)$. Then $f$ is $G$-minimal. However, $(G\times G)_{f\times f}((0,0),(0,0))$ is not dense in $X\times X$ since $U_1\times U_2\times U_3\times U_4$, where $U_1=\{\theta:5/12<\theta<7/12\}$, $U_2=\{\theta:1/12<\theta<1/6\}$, $U_3=U_4=\{\theta:11/12<\theta\leqslant 1\}\cup\{\theta:0\leqslant\theta<1/12\}$, is an open set in $X\times X$ containing $(\pi.f(0,0),(0,0))$ but not intersecting $(G\times G)_{f\times f}((0,0),(0,0))$.
\end{ex}

Following result gives a sufficient condition for the product of two $G$-minimal maps to be $G\times G$-minimal on the product space.

\begin{prop}
Let $X$, $Y$ be $G$-spaces and $f:X\rightarrow X$, $h:Y\rightarrow Y$ be pseudoequivariant, $G$-minimal maps. Then $f\times h$ is $G\times G$-minimal iff for all $g$, $k\in G$, $x\in X$, $y\in Y$, $(g.f(x),y)$, $(x,k.h(y))\in\overline{(G\times G)_{f\times h}(x,y)}$.
\end{prop}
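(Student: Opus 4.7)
The forward direction is immediate: if $f\times h$ is $G\times G$-minimal, then $\overline{(G\times G)_{f\times h}(x,y)}=X\times Y$ for every $(x,y)\in X\times Y$, so both $(g.f(x),y)$ and $(x,k.h(y))$ automatically lie in this closure. The substantive content is the converse.

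For the converse, I would fix an arbitrary base point $(x_0,y_0)\in X\times Y$ and set $S=\overline{(G\times G)_{f\times h}(x_0,y_0)}$. Since $f$ and $h$ are pseudoequivariant, the product map $f\times h$ is pseudoequivariant for the diagonal action of $G\times G$, and therefore $(G\times G)_{f\times h}(x_0,y_0)$ is the smallest $+(f\times h)$-invariant, $(G\times G)$-invariant set containing $(x_0,y_0)$; taking closure, $S$ is the smallest such closed set. The key propagation observation is: for every $(x',y')\in S$ and every $g,k\in G$, both $(g.f(x'),y')$ and $(x',k.h(y'))$ again lie in $S$. Indeed, since $(x',y')\in S$ and $S$ is closed, $+(f\times h)$-invariant, and $(G\times G)$-invariant, minimality gives $(G\times G)_{f\times h}(x',y')\subseteq S$, hence $\overline{(G\times G)_{f\times h}(x',y')}\subseteq S$; applying the hypothesis at $(x',y')$ then places $(g.f(x'),y')$ and $(x',k.h(y'))$ inside $S$.

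With this, I would run two successive inductions. First, combining the $G\times G$-invariance of $S$ with the propagation statement on the first coordinate, induction on $n\geqslant 0$ yields $(g.f^n(x_0),y_0)\in S$ for every $g\in G$ and $n\geqslant 0$, i.e.\ $G_f(x_0)\times\{y_0\}\subseteq S$. Since $S$ is closed and $\overline{G_f(x_0)}=X$ by $G$-minimality of $f$, this gives $X\times\{y_0\}\subseteq S$. Fixing any $x'\in X$ and running the symmetric induction on the second coordinate using $h$, I obtain $\{x'\}\times G_h(y_0)\subseteq S$, and closing with $G$-minimality of $h$ gives $\{x'\}\times Y\subseteq S$. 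Taking the union over $x'\in X$ gives $S=X\times Y$. Since $(x_0,y_0)$ was arbitrary, Remark~\ref{c1}(a) (applied to the pseudoequivariant map $f\times h$) concludes that $f\times h$ is $G\times G$-minimal.

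The main obstacle, and the only non-routine step, is the propagation observation: the hypothesis is a priori only a statement about the orbit closure from a fixed base point, so one must ensure it can be transferred to every point of $S$. This is precisely where the characterization of $(G\times G)_{f\times h}(x',y')$ as the smallest $+(f\times h)$-, $(G\times G)$-invariant set containing $(x',y')$ enters, and therefore where pseudoequivariance of both $f$ and $h$ is essential. Once that is in place, the two inductions and the density arguments supplied by $G$-minimality of $f$ and $h$ finish the proof.
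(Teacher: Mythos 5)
Your proposal is correct and follows essentially the same route as the paper: the paper first proves the intermediate claim $\overline{(G\times G)_{f\times h}(x,y)}=\overline{G_f(x)}\times\overline{G_h(y)}$ by exactly the induction you describe, and your ``propagation observation'' is precisely the step the paper compresses into ``by hypothesis of the claim and using induction, one can prove that $(g.f^m(x),k.h^n(y))\in\overline{(G\times G)_{f\times h}(x,y)}$.'' Your handling of the forward direction (the orbit closure is all of $X\times Y$, so the membership is trivial) is a mild streamlining of the paper's appeal to the converse of its claim, but the substance is identical.
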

\begin{proof}
Let $G'=G\times G$. First we claim that $\overline{G'_{f\times h}(x,y)}=\overline{G_f(x)}\times\overline{G_h(y)}$ for all $x\in X$, $y\in Y$ iff $(g.f(x),y)$, $(x,k.h(y))\in\overline{G'_{f\times h}(x,y)}$ for all $g$, $k\in G$, $x\in X$, $y\in Y$. Let $(g.f(x),y)$, $(x,k.h(y))\in\overline{G'_{f\times h}(x,y)}$ for all $g$, $k\in G$, $x\in X$, $y\in Y$. Clearly $\overline{G'_{f\times h}(x,y)}\subseteq\overline{G_f(x)}\times\overline{G_h(y)}$ for all $x\in X$, $y\in Y$. Also by hypothesis of the claim and using induction, one can prove that $(g.f^m(x),k.h^n(y))\in\overline{G'_{f\times h}(x,y)}$ for all $g$, $k\in G$, $x\in X$, $y\in Y$, $m\geqslant 0$, $n\geqslant 0$. This in turn implies that $\overline{G_f(x)}\times\overline{G_h(y)}\subseteq\overline{G'_{f\times h}(x,y)}$ for all $x\in X$, $y\in Y$. Converse is straightforward. Hence the claim.

Now if $(g.f(x),y)$, $(x,k.h(y))\in\overline{G'_{f\times h}(x,y)}$ for all $g$, $k\in G$, $x\in X$, $y\in Y$, then by the claim and using $G$-minimality of both $f$ and $h$ we have $\overline{G'_{f\times h}(x,y)}=\overline{G_f(x)}\times\overline{G_h(y)}=X\times Y$ for all $x\in X$, $y\in Y$. Conversely, by $G$-minimality of both $f$, $h$ and $G\times G$-minimality of $f\times h$ we have $\overline{G'_{f\times h}(x,y)}=\overline{G_f(x)}\times\overline{G_h(y)}$ for all $x\in X$, $y\in Y$ and hence by the claim we have $(g.f(x),y)$, $(x,k.h(y))\in\overline{G'_{f\times h}(x,y)}$ for all $g$, $k\in G$, $x\in X$, $y\in Y$.
\end{proof}

Note that if $x\in X$ is a $G_f$-periodic point of $f$ with $G_f$-prime period $k$ and $f:X\rightarrow X$ is pseudoequivariant, then $G_f(x)=\cup_{m=0}^{k-1}G(f^m(x))$.

\begin{prop}
Let $X$ be a Hausdorff $G$-space, where $G$ is a compact group and $f:X\rightarrow X$ be pseudoequivariant and $G$-minimal. Then either $X$ has no isolated points or $X$ is a single $G_f$-orbit.
\end{prop}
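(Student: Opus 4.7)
The plan is the following. Assume $X$ has an isolated point $x_0$, so that $\{x_0\}$ is open in $X$; I will prove that in this case $X$ coincides with the single $G_f$-orbit $G_f(x_0)$, yielding the dichotomy.

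The first step is to show that $x_0$ is a $G_f$-periodic point. By $G$-minimality the set $G_f(f(x_0))$ is dense in $X$, hence meets the nonempty open set $\{x_0\}$. Thus there exist $g\in G$ and $k\geqslant 0$ with $g.f^k(f(x_0))=x_0$, that is, $g.f^{k+1}(x_0)=x_0$, so $x_0$ is $G_f$-periodic. Let $m\geqslant 1$ denote its $G_f$-prime period. The small trick here is to apply the density hypothesis to $f(x_0)$ rather than to $x_0$, so as to guarantee a nonzero exponent and obtain genuine $G_f$-periodicity in the sense of the definition given earlier; this is the one place where some care is needed.

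The second step invokes the observation recorded just before the proposition: pseudoequivariance of $f$ together with the $G_f$-periodicity of $x_0$ of prime period $m$ yields
\[
G_f(x_0)=\bigcup_{j=0}^{m-1}G(f^j(x_0)).
\]
Each set $G(f^j(x_0))$ is the image of the compact group $G$ under the continuous orbit map $g\mapsto g.f^j(x_0)$, so it is compact. Hence $G_f(x_0)$ is a finite union of compact subsets of the Hausdorff space $X$, and is itself compact, therefore closed in $X$.

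Finally, $G$-minimality gives $\overline{G_f(x_0)}=X$, and since $G_f(x_0)$ is already closed this forces $G_f(x_0)=X$. Thus $X$ is a single $G_f$-orbit, which is the desired conclusion in the case that $X$ has an isolated point. I do not anticipate any real obstacle beyond the careful choice of base point in the first step; the remainder is a direct use of compactness of $G$ and the Hausdorff hypothesis.
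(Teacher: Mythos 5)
Your proposal is correct and follows essentially the same route as the paper: apply $G$-minimality to $f(x_0)$ to get $G_f$-periodicity of the isolated point, invoke the decomposition $G_f(x_0)=\bigcup_{j=0}^{m-1}G(f^j(x_0))$ noted before the proposition, and use compactness of $G$ with Hausdorffness of $X$ to conclude that $G_f(x_0)$ is closed and hence all of $X$. No discrepancies to report.
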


\begin{proof}
If $X$ has no isolated points, we are done. If $x\in X$ is an isolated point, then $f$ being $G$-minimal, $\overline{G_f(f(x))}=X$ which implies that $x=g.f^k(x)$ for some $g\in G$ and $k\geqslant 1$. Therefore $G_f(x)=\cup_{m=0}^{k-1}G(f^m(x))$. Now $G$ being compact and $X$ being Hausdorff, $G(y)$ is closed in $X$ for every $y\in X$. Thus $X=\overline{G_f(x)}=G_f(x)$.
\end{proof}
\begin{prop}
Let $X$ be a $G$-space and $f:X\rightarrow X$ be pseudoequivariant and $G$-transitive ($GT$). If $M\subseteq X$ is a $G$-minimal set of $f$, then either $M=X$ or $M$ is nowhere dense in $X$.
\end{prop}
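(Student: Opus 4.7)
The plan is to argue by contradiction, assuming $M \neq X$ and that $M$ is \emph{not} nowhere dense in $X$. Since $M$ is closed by hypothesis, failing to be nowhere dense amounts to having nonempty interior, so I would set
\[
V = \operatorname{int}(M) \neq \emptyset, \qquad U = X \setminus M \neq \emptyset,
\]
both of which are open in $X$ (the second because $M$ is closed and proper).

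Next I would feed the pair $(V, U)$ into the $G$-transitivity of $f$: there exist $g \in G$ and $k \in \mathbb{N}$ such that $g.f^{k}(V) \cap U \neq \emptyset$. The remainder is to derive a contradiction from the invariance properties of a $G$-minimal set. Since $V \subseteq M$ and $M$ is $+f$ invariant, iterating gives $f^{k}(V) \subseteq f^{k}(M) \subseteq M$; since $M$ is $G$-invariant, $g.f^{k}(V) \subseteq g.M \subseteq M$. Therefore $g.f^{k}(V) \cap U = g.f^{k}(V) \cap (X \setminus M) = \emptyset$, which contradicts the conclusion of $G$-transitivity.

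There is no real obstacle here: once $V$ and $U$ are chosen, the argument is a one-line application of $G$-transitivity followed by the $+f$ and $G$ invariance of $M$, so pseudoequivariance is not actually invoked in this step (it is presumably carried as a standing assumption because it was used to set up $G_f(\cdot)$ as a $+f$, $G$-invariant object in the definition of $G$-minimality). The only point to be careful about is to state explicitly that the closedness of $M$ lets us replace ``nowhere dense'' by ``empty interior'', so that the dichotomy $M = X$ or $\operatorname{int}(M) = \emptyset$ is genuine.
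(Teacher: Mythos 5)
Your proof is correct. The dichotomy ``$M=X$ or $\operatorname{int}(M)=\emptyset$'' is indeed equivalent to the stated conclusion because $M$, being a $G$-minimal set, is closed; and the contradiction you derive is sound: with $V=\operatorname{int}(M)$ and $U=X\setminus M$ both nonempty and open, $G$-transitivity produces $g\in G$ and $k\in\mathbb{N}$ with $g.f^{k}(V)\cap U\ne\emptyset$, while $+f$ invariance and $G$-invariance of $M$ force $g.f^{k}(V)\subseteq g.f^k(M)\subseteq g.M\subseteq M$. The paper argues in the opposite direction: it observes that $X\setminus M$ is open, $-f$ invariant and $G$-invariant, so that $G_f^{-}(X\setminus M)=X\setminus M$, and then invokes the fact that for a pseudoequivariant $G$-transitive map the backward $G_f$-orbit of any nonempty open set is dense; density of $X\setminus M$ then gives $\operatorname{int}(M)=\emptyset$ directly rather than by contradiction. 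The substantive difference is the one you flag yourself: the paper genuinely uses pseudoequivariance (to turn the transitivity condition $h.f^{k}(W)\cap U\ne\emptyset$ into a statement about $g.f^{-k}(U)$ meeting $W$), whereas your argument pushes the interior of $M$ forward instead of pulling $X\setminus M$ back, and therefore needs only the invariance properties built into the definition of a $G$-minimal set. So your version is slightly more economical and shows that pseudoequivariance is not actually required for this proposition; the paper's version has the minor virtue of exhibiting the stronger fact that $X\setminus M$ equals its own backward $G_f$-orbit and is dense.
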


\begin{proof}
If $M=X$, we are done. Suppose that $M\ne X$. Since $M$ is a $G$-minimal set, it is nonempty, closed, $+f$ invariant, $G$-invariant. Then $X\setminus M$ is $-f$ invariant and $G$-invariant so that $G_f^-(X\setminus M)=X\setminus M$. Since $f$ is $G$-transitive, pseudoequivariant and $X\setminus M$ is a nonempty open set, $\overline{G_f^-(X\setminus M)}=X$ implying that $\overline{X\setminus M}=X$. Thus int$(M)=\emptyset$.
\end{proof}
Next result shows that a pseudoequivariant map on a $G$-space is $G$-minimal iff its induced map on the related orbit space is minimal.

\begin{prop}
Let $X$ be a $G$-space and $f:X\rightarrow X$ be pseudoequivariant. Then $f$ is $G$-minimal iff its induced map $\bar{f}:X/G\rightarrow X/G$ is minimal.
\end{prop}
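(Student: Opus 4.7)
The plan is to exploit the commuting square $p\circ f=\bar f\circ p$ together with the fact (noted in the preliminaries) that the orbit map $p:X\to X/G$ is continuous, surjective, and open. The central observation I would establish first is that the $G_f$-orbit of $x$ in $X$ and the $\bar f$-orbit of $p(x)$ in $X/G$ are compatible under $p$: explicitly, $O_{\bar f}(p(x))=\{\bar f^k(p(x)):k\geqslant 0\}=\{p(f^k(x)):k\geqslant 0\}=p(O_f(x))=p(G_f(x))$, where the last equality uses that $p$ identifies points in the same $G$-orbit. Equivalently, $p^{-1}(O_{\bar f}(p(x)))=G_f(x)$, since $p^{-1}(p(A))=G(A)$ and $G(O_f(x))=G_f(x)$.

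For the forward direction, I would assume $f$ is $G$-minimal and fix an arbitrary point $\bar x\in X/G$, writing $\bar x=p(x)$ by surjectivity of $p$. Then $\overline{G_f(x)}=X$, so applying $p$ and using continuity of $p$,
\[
X/G=p(X)=p(\overline{G_f(x)})\subseteq \overline{p(G_f(x))}=\overline{O_{\bar f}(\bar x)},
\]
which shows $\bar f$ is minimal.

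For the reverse direction, I would assume $\bar f$ is minimal and fix $x\in X$ and a nonempty open set $U\subseteq X$. Openness of $p$ gives $p(U)$ nonempty and open in $X/G$, and minimality of $\bar f$ at $p(x)$ produces $k\geqslant 0$ with $\bar f^k(p(x))\in p(U)$, i.e.\ $p(f^k(x))=p(u)$ for some $u\in U$. By the definition of $\sim$, this means $u=g.f^k(x)$ for some $g\in G$, so $G_f(x)\cap U\neq\emptyset$. Hence $\overline{G_f(x)}=X$, and since $X$ is automatically nonempty, closed, $+f$ invariant, and $G$-invariant, $X$ is itself a $G$-minimal set of $f$.

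The argument is essentially routine once the orbit correspondence is written down; the only point requiring care is making sure $p$'s openness (for the reverse direction) and continuity plus surjectivity (for the forward direction) are invoked in the right places, and that the identity $p(G_f(x))=O_{\bar f}(p(x))$ is justified via pseudoequivariance, so that no extraneous hypotheses creep in. No step is a genuine obstacle.
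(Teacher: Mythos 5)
Your proposal is correct and follows essentially the same route as the paper: both directions rest on the commuting square $p\circ f=\bar f\circ p$ together with continuity, surjectivity, and openness of the orbit map, and your reverse direction is verbatim the paper's argument. The only cosmetic difference is that you phrase the forward direction via the inclusion $p(\overline{G_f(x)})\subseteq\overline{p(G_f(x))}$ rather than pulling back an open set, which is an equivalent use of continuity.
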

\begin{proof}
Suppose $f$ is $G$-minimal. Let $G(x)\in X/G$ and $U$ be a nonempty open subset of $X/G$. Then $p^{-1}(U)$ is a nonempty open subset of $X$ so that there exist $g\in G$, $k\geqslant 0$ such that $g.f^k(x)\in p^{-1}(U)$. This gives $\bar{f}^k(G(x))\in U$ and hence $O_{\bar{f}}(G(x))$ is dense in $X/G$.

Conversely, suppose $\bar{f}$ is minimal. Let $x\in X$ and $U$ be a nonempty open subset of $X$. Then $p(U)$ is a nonempty open subset of $X/G$ so that there exists $k\geqslant 0$ such that $\bar{f}^k(G(x))\in p(U)$. This implies that there exists $g\in G$ such that $g.f^k(x)\in U$ and hence $\overline{G_f(x)}=X$.
\end{proof}

We now obtain a nice characterization of pseudoequivariant $G$-minimal maps in terms of open sets in a sequentially compact $G$-space.

\begin{prop}
Let $X$ be a sequentially compact $G$-space and $f:X\rightarrow X$ be pseudoequivariant. Then $f$ is $G$-minimal iff for every nonempty open subset $U$ of $X$, there exists $n\in\mathbb{N}$ such that $\cup_{g\in G}\cup_{k=0}^ng.f^{-k}(U)=X$.
\end{prop}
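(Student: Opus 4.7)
The plan is to prove this as a $G$-equivariant, sequentially-compact version of the standard Birkhoff-style characterization of minimality. For the forward direction I would fix a nonempty open $U$ and, for each $x\in X$, use $G$-minimality to produce $g\in G$ and $k\geqslant 0$ with $g.f^k(x)\in U$, which rewrites as $x\in f^{-k}(g^{-1}.U)$. Then I would apply pseudoequivariance in the form $f^{-k}(G(U))=G(f^{-k}(U))$, noted in the introduction, to conclude $x\in g'.f^{-k}(U)$ for some $g'\in G$. This yields $X=\bigcup_{g\in G,\,k\geqslant 0} g.f^{-k}(U)$. Setting $W_n=\bigcup_{g\in G}\bigcup_{k=0}^{n} g.f^{-k}(U)$ gives an increasing chain of open sets (each $g.f^{-k}(U)$ is open because $T_g$ is a homeomorphism and $f$ is continuous) whose union is $X$. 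Since sequential compactness implies countable compactness, an increasing countable open cover must stabilize at some finite stage, giving $W_n=X$ for some $n\in\mathbb{N}$.

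For the reverse direction I would argue by contradiction. Assume the open-return condition holds but $f$ is not $G$-minimal, so some $x\in X$ has $Y:=\overline{G_f(x)}\ne X$. Then $U:=X\setminus Y$ is nonempty and open, and the hypothesis provides $n\in\mathbb{N}$ together with $g\in G$ and $0\leqslant k\leqslant n$ such that $x\in g.f^{-k}(U)$. Unwinding gives $f^k(g^{-1}.x)\in U$, and pseudoequivariance identifies $f^k(g^{-1}.x)\in f^k(G(x))=G(f^k(x))$, so it equals $h.f^k(x)$ for some $h\in G$. But $h.f^k(x)\in G_f(x)\subseteq Y$, contradicting $U\cap Y=\emptyset$.

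The main (mild) obstacle I anticipate is the passage from the a priori possibly uncountable cover indexed by $G\times\mathbb{N}$ to a uniform bound on $k$. The resolution is to organize the cover as the countable increasing chain $\{W_n\}$, at which point countable compactness (delivered by sequential compactness) applies cleanly. The identities $f(G(A))=G(f(A))$ and $f^{-1}(G(A))=G(f^{-1}(A))$ for pseudoequivariant $f$ do all of the bookkeeping needed to interchange the positions of elements of $G$ inside and outside iterates of $f$.
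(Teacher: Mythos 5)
Your proof is correct, and both directions rest on the same two ingredients as the paper's: the identity $f^{-k}(G(A))=G(f^{-k}(A))$ for pseudoequivariant $f$, which lets you trade $x\in f^{-k}(g^{-1}.U)$ for $x\in g'.f^{-k}(U)$, and a compactness argument on the increasing chain $W_n=\cup_{g\in G}\cup_{k=0}^{n}g.f^{-k}(U)$. The one genuine (if mild) difference is how compactness enters in the forward direction. The paper argues by contradiction: it picks $x_n\notin W_n$, uses sequential compactness to extract a convergent subsequence $x_{n_k}\to x_0$, applies $G$-minimality at the single point $x_0$ to produce an open neighbourhood $g'.f^{-m}(U)$ of $x_0$, and derives a contradiction once $n_k>m$. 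You instead apply minimality pointwise to exhibit $\{W_n\}$ as a countable increasing open cover of $X$ and then invoke the implication sequentially compact $\Rightarrow$ countably compact to get a finite subcover, hence $W_n=X$ for some $n$; that implication holds in arbitrary topological spaces, so the step is sound. The two arguments are contrapositives of each other and cost the same; yours is closer to the classical open-cover characterization of minimality, while the paper's uses the sequential hypothesis directly. In the reverse direction you argue by contradiction with $U=X\setminus\overline{G_f(x)}$, whereas the paper shows directly that $G_f(x)$ meets every nonempty open set, but the content is identical.
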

\begin{proof}
Let $f$ be $G$-minimal. Assume that there is a nonempty open subset $U$ of $X$ satisfying that for every $n\in\mathbb{N}$, there exists $x_n\in X$ such that $x_n\notin\cup_{g\in G}\cup_{k=0}^ng.f^{-k}(U)$. Then $X$ being sequentially compact, there exists a convergent subsequence $(x_{n_k})$ of $(x_n)$ such that $x_{n_k}\rightarrow x_0$ as $k\rightarrow\infty$. Since $f$ is $G$-minimal, $\overline{G_f(x_0)}=X$ so that there exist $g\in G$, $m\geqslant 0$ such that $g.f^m(x_0)\in U$. By pseudoequivariancy of $f$, $x_0\in g'.f^{-m}(U)$ for some $g'\in G$. Now $g'.f^{-m}(U)$ being an open neighbourhood of $x_0$, there exists $k_0\in\mathbb{N}$ such that $x_{n_k}\in g'.f^{-m}(U)$ for all $k\geqslant k_0$. Therefore there exists $k\in\mathbb{N}$ sufficiently large such that $n_k>m$ and $x_{n_k}\in g'.f^{-m}(U)$. Thus $x_{n_k}\in\cup_{g\in G}\cup_{i=0}^{n_k}g.f^{-i}(U)$, which contradicts the choice of $x_{n_k}$.

Conversely, let $x\in X$. To prove $\overline{G_f(x)}=X$, let $U$ be a nonempty open subset of $X$. By hypothesis, there exists $n\in\mathbb{N}$ such that $\cup_{g\in G}\cup_{k=0}^ng.f^{-k}(U)=X$. Since $x\in X$, there exist $g\in G$, $0\leqslant k_0\leqslant n$ such that $x\in g.f^{-k_0}(U)$ which gives $g'.f^{k_0}(x)\in U$ for some $g'\in G$ and hence $\overline{G_f(x)}=X$.
\end{proof}

The following result shows that in any compact Hausdorff $G$-space there are $G$-minimal sets and any two $G$-minimal sets are either disjoint or equal.

\begin{prop}
Let $X$ be a compact Hausdorff $G$-space and $f:X\rightarrow X$ be pseudoequivariant. Then $X$ contains a nonempty, $G$-minimal, $f$-invariant subset. Also, any two distinct $G$-minimal sets of $f$ are disjoint.
\end{prop}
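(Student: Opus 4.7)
The plan is a standard Zorn's lemma argument adapted to the $G$-space setting, together with a short use of pseudoequivariance to upgrade $+f$ invariance to $f$-invariance, followed by a one-line overlap argument for disjointness.

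First I would let $\mathcal{F}$ denote the family of all nonempty, closed, $+f$ invariant, $G$-invariant subsets of $X$, ordered by reverse inclusion. The family is nonempty since $X\in\mathcal{F}$. To apply Zorn's lemma, given a chain $\{F_\alpha\}\subseteq\mathcal{F}$, I would set $F=\bigcap_\alpha F_\alpha$. Each $F_\alpha$ is closed in the compact Hausdorff space $X$, so the finite intersection property gives $F\neq\emptyset$; $F$ is clearly closed, $+f$ invariant and $G$-invariant, and it is an upper bound (in the reverse-inclusion order) of the chain. By Zorn's lemma there exists a minimal element $M\in\mathcal{F}$.

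Next I would show $M$ is a $G$-minimal set of $f$. Pick any $y\in M$ and consider $\overline{G_f(y)}$. Since $f$ is pseudoequivariant, $G_f(y)$ is the smallest $+f$ invariant, $G$-invariant set containing $y$ (as recalled in Section~1), so its closure is closed, $+f$ invariant (continuity of $f$ plus $f(G_f(y))\subseteq G_f(y)$), $G$-invariant (each $T_g$ is a homeomorphism, hence commutes with closure), and nonempty. As $M$ is closed, $+f$ invariant and $G$-invariant with $y\in M$, we get $\overline{G_f(y)}\subseteq M$, and minimality of $M$ in $\mathcal{F}$ forces $\overline{G_f(y)}=M$. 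Thus $M$ is $G$-minimal. To see $M$ is $f$-invariant, observe that $f(M)$ is compact, hence closed (since $X$ is Hausdorff), $+f$ invariant, and $G$-invariant by pseudoequivariance: for $g\in G$ and $m\in M$, $g.f(m)\in G(f(m))=f(G(m))\subseteq f(M)$. So $f(M)\in\mathcal{F}$ and $f(M)\subseteq M$; minimality gives $f(M)=M$.

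Finally, for the disjointness statement, suppose $M_1,M_2$ are $G$-minimal sets of $f$ with $M_1\cap M_2\neq\emptyset$, and pick $y\in M_1\cap M_2$. By the definition of $G$-minimal set applied inside each $M_i$, $\overline{G_f(y)}=M_1$ and $\overline{G_f(y)}=M_2$, so $M_1=M_2$.

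The only mildly delicate point is the $+f$ invariance of $\overline{G_f(y)}$ (which needs continuity of $f$ together with pseudoequivariance to know $G_f(y)$ itself is $+f$ invariant) and the passage from $+f$ invariance to $f$-invariance for $M$; both are short once pseudoequivariance and compact Hausdorff are invoked. Everything else is routine Zorn.
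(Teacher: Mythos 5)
Your proof is correct and follows essentially the same route as the paper: a Zorn's lemma argument on the family of nonempty, closed, $+f$ invariant, $G$-invariant subsets, using compactness for the chain condition, pseudoequivariance plus minimality to get $f(M)=M$, and the common-point argument for disjointness. If anything, your version is slightly more careful, since you verify the upper-bound condition for arbitrary chains and spell out explicitly why the minimal element is a $G$-minimal set.
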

\begin{proof}
If $X$ itself is $G$-minimal, we are done. Let us suppose that $X$ is not $G$-minimal. Then by Remark \ref{c1}$(a)$, the collection of nonempty, proper, closed, $+f$ invariant, $G$-invariant subsets of $X$, say $\mathcal{C}$, is nonempty. Let $\{C_n:n\in\mathbb{N}\}$ be a nested sequence in $\mathcal{C}$ and $C=\cap_{n\in\mathbb{N}}C_n$. Then $C$ is closed, proper, $+f$ invariant, $G$-invariant. By compactness of $X$, it is nonempty so that $C\in\mathcal{C}$. Therefore by Zorn's lemma, $\mathcal{C}$ has a minimum element, say $A$. Using compactness and Hausdorffness of $X$ we have $f(A)\in\mathcal{C}$. Thus by minimality of $A$, $f(A)=A$ and hence $A$ is $f$-invariant. Also $A$ is $G$-minimal since it does not contain any nonempty, proper, closed, $+f$ invariant, $G$-invariant subset.

If $A$ and $B$ are two distinct $G$-minimal sets of $f$ and $x\in A\cap B$, then $f$ being pseudoequivariant, $\overline{G_f(x)}$ is a nonempty, closed, $+f$ invariant, $G$-invariant subset of both $A$ and $B$ giving $A=\overline{G_f(x)}=B$.
\end{proof}

\begin{rmk}
Note that in general, strongly $G$-mixing and $G$-minimality are not related as justified by the following examples.
\end{rmk}

\begin{ex}\label{ex2}
Consider the action of $G=\mathbb{Z}_2$ on $S^1$ given by $0.\theta=\theta$, $1.\theta=-\theta$, $\theta\in S^1$ and the doubling map $f:S^1\rightarrow S^1$ defined by $f(\theta)=2\theta$. Then $f$ is strongly $G$-mixing but not $G$-minimal.
\end{ex}

\begin{ex}
Consider the $G$-space and the map $f$ given in Example \ref{imp}. Then one can observe that $f$ is $G$-minimal but not strongly $G$-mixing.
\end{ex}

Combining above results we have the following implications, where the preconditions P1 and P2 are as follows:
\begin{enumerate}
\item[] P1: $X$ is a $G$-space, $f:X\rightarrow X$ is a pseudoequivariant map.
\item[] P2: $X$ is a $G$-space, $f:X\rightarrow X$ is a pseudoequivariant map with dense set of $G_f$-periodic points in $X$.
\end{enumerate}

Here $SGM$, $WGM$, $TGT$, $GT$ and $GM$ stand for strongly $G$-mixing, weakly $G$-mixing, totally $G$-transitive, $G$-transitive and $G$-minimal respectively.

\begin{picture}(160,120)(6,-95)
\put(40,-3){\shadowbox{$SGM$}}
\put(83,5){\vector(1,0){50}}

\put(140,-3){\shadowbox{$WGM$}}
\put(160,-6){\vector(-1,-1){35}}
\put(130,-23){$\footnotesize{\mbox{P1}}$}

\put(95,-62){\shadowbox{$TGT$}}
\put(130,-43){\vector(1,1){35}}
\put(149.5,-32){$\footnotesize{\mbox{P2}}$}

\put(60,-6){\vector(1,-1){35}}

\put(133,-53){\vector(1,0){50}}
\put(189,-62){\shadowbox{$GT$}}

\put(272,-53){\vector(-1,0){50}}
\put(277,-62){\shadowbox{$GM$}}
\end{picture}

Note that $GT\not\longrightarrow TGT$ (Example \ref{ex1})

$TGT\not\longrightarrow SGM$ (Example \ref{imp})

$GT\not\longrightarrow GM$ (Example \ref{mixing})\\

We are looking for conditions under which $G$-minimality and $G$-mixing are related and also for examples justifying that weakly $G$-mixing need not imply strongly $G$-mixing.\\\\

\let\MakeUppercase\relax

\end{document}